\def\dim{\operatorname{dim}}
\newcommand{\C}{\mathbb{C}}
\newtheorem{teo}{Theorem}[section]
\newtheorem{co}[teo]{Corollary}
\theoremstyle{definition}
\newtheorem{defi}[teo]{Definition}
\theoremstyle{definition}
\newtheorem{ex}[teo]{Example}
\theoremstyle{definition}
\begin{document}

\title{A LÊ-GREUEL TYPE FORMULA FOR THE IMAGE MILNOR NUMBER}
\author{J.J. Nuño-Ballesteros, I. Pallarés-Torres}

\address{Departament de Matemàtiques,
Universitat de Val\`encia, Campus de Burjassot, 46100 Burjassot
SPAIN}

\email{Juan.Nuno@uv.es}

\email{irpato@alumni.uv.es}

\date{}

\thanks{Work partially supported by DGICYT Grant MTM2015-64013-P}
\subjclass[2000]{Primary 32S30; Secondary 32S05, 58K40} 
\keywords{Image Milnor number, Lê-Greuel formula, finite determinacy}

\begin{abstract} Let $f:(\C^n,0)\rightarrow (\C^{n+1},0)$ be a corank 1 finitely determined map germ. For a generic linear form $p:(\C^{n+1},0)\to(\C,0)$ we denote by $g:(\C^{n-1},0)\rightarrow (\C^{n},0)$ the transverse slice of $f$ with respect to $p$. We prove that the sum of the image Milnor numbers $\mu_I(f)+\mu_I(g)$ is equal to the number of critical points of the stratified Morse function $p|_{X_s}:X_s\to\C$, where $X_s$ is the disentanglement of $f$ (i.e., the image of a stabilisation $f_s$ of $f$).
\end{abstract}

\maketitle

\section{Introduction}

The Lê-Greuel formula \cite{GRE,LE} provides a recursive method to compute the Milnor number of an isolated complete intersection singularity (ICIS). We recall that if $(X,0)$ is a $d$-dimensional ICIS defined as the zero locus of a map germ $g:(\C^n,0)\rightarrow(\C^{n-d},0)$, then the Milnor fibre $X_s=g^{-1}(s)$ (where $s$ is a generic value in $\C^{n-d}$) has the homotopy type of a bouquet of $d$-spheres and the number of such spheres is called the Milnor number $\mu(X,0)$. If $d>0$, we can take $p:\C^n\rightarrow \C$ a generic linear projection with $H=p^{-1}(0)$ and such that $(X\cap H,0)$ is a $(d-1)$-dimensional ICIS.
Then,
\begin{equation}\label{Le-Greuel}
\mu(X,0)+\mu(X\cap H,0)=\dim_\C\frac{\mathcal{O}_n}{(g)+J(g,p)},
\end{equation}
where $\mathcal{O}_n$ is the ring of function germs from $(\C^n,0)$ to $\C$, $(g)$ is the ideal in $\mathcal{O}_n$ generated by the components of $g$ and $J(g,p)$ is the Jacobian ideal of $(g,p)$ (i.e., the ideal generated by the maximal minors of the Jacobian matrix). Note that $X_s$ is smooth and if $p$ is generic enough, then the restriction $p|_{X_s}:X_s\to\C$ is a Morse function and the dimension appearing in the right hand side of \eqref{Le-Greuel} is equal to the number of critical points of 
$p|_{X_s}$.

The aim of this paper is to obtain a Lê-Greuel type formula for the image Milnor number of a finitely determined map germ $f:(\C^n,0)\rightarrow (\C^{n+1},0)$. Mond showed in \cite{VC} that the disentanglement $X_s$ (i.e., the image of a stabilisation $f_s$ of $f$) has the homotopy type of a bouquet of $n$-spheres and the number of such spheres is called the image Milnor number $\mu_I(f,0)$.
The celebrated Mond's conjecture says that 
$$
\mathcal A_e\text{-}\mbox{codim}(f)\le \mu_I(f),
$$
with equality if $f$ is weighted homogeneous. Mond's conjecture is known to be true for $n=1,2$ but it remains still open for $n\ge 3$ (see \cite{VC,LB}). We feel that our Lê-Greuel type formula can be useful to find a proof of the conjecture in the general case. In fact, it would be enough to prove that the module which controls the number of critical points of a generic linear function is Cohen-Macaulay and then, use an induction argument on the dimension $n$ (see \cite{BNP} for details about Mond's conjecture).

We assume that $f$ has corank 1 and $n>1$. Then given a generic linear form $p:\C^{n+1}\rightarrow \C$ we can see $f$ as a 1-parameter unfolding of another map germ $g:(\C^{n-1},0)\rightarrow (\C^{n},0)$ which is the transverse slice of $f$ with respect to $p$. This means that $g$ has image $(X\cap H,0)$, where $(X,0)$ is the image of $f$ and $H=p^{-1}(0)$. The disentanglement $X_s$ is not smooth but it has a natural Whitney stratification given by the stable types. If $p$ is generic enough, the restriction $p|_{X_s}:X_s\to\C$ is a stratified Morse function. Our Lê-Greuel type formula is
\begin{equation}\label{Le-Greuel2}
\mu_I(f)+\mu_I(g)=\#\Sigma(p|_{X_s}),
\end{equation}
where the right hand side is the number of critical points of $p|_{X_s}$ as a stratified Morse function. The case $n=1$ has to be considered separately, in this case we have
\begin{equation}\label{Le-Greuel3}
\mu_I(f)+m_0(f)-1=\#\Sigma(p|_{X_s}),
\end{equation}
where $m_0(f)$ is the multiplicity of the curve parametrized by $f$. This makes sense, since $\mu(X,0)=m_0(X,0)-1$ for a  0-dimensional ICIS $(X,0)$.

\section{Multiple point spaces and Marar's formula}


In this section we recall Marar's formula for the Euler characteristic of the disentanglement of a corank 1 finitely determined map germ. We first recall the Marar-Mond \cite{MM} construction of the $k$th-multiple point spaces for corank 1 map germs, which is based on the iterated divided differences. Let $f:(\C^n,0)\rightarrow (\C^{p},0)$ be a corank 1 map germ. We can choose coordinates in the source and target such that $f$ is written in the following form:
$$
f(x,z)=(x,f_n(x,z),\dots,f_p(x,z)),\ x\in\C^{n-1},\ z\in\C.
$$
This forces that if $f(x_1,z_1)=f(x_2,z_2)$ then necessarily $x_1=x_2$. Thus, it makes sense to embed the double point space of $f$ in $\C^{n+1}$ instead of $\C^n\times\C^n$. Analogously, we will consider the $k$th-multiple point space embedded in $\C^{n+k-1}$.

We construct an ideal $I_k(f)\subset \mathcal{O}_{n+k-1}$ defined as follows: $I_k(f)$ is generated by $(k-1)(p-n+1)$ functions $\Delta_i^{(j)}\in  \mathcal{O}_{n+k-1}$, $1\le i\le k-1$, $n\le j\le p$. Each $\Delta_i^{(j)}$ is a function only on the variables $x,z_1,\dots,z_{i+1}$ such that:
$$
\Delta^{(j)}_1(x,z_1,z_2)=\frac{f_j(x,z_1)-f_j(x,z_2)}{z_1-z_2},
$$
and for $1\le i\le k-2$,
$$
\Delta^{(j)}_{i+1}(x,z_1,\dots,z_{i+2})=\frac{\Delta^{(j)}_i(x,z_1,\dots,z_i,z_{i+1})-\Delta^{(j)}_i(x,z_1,\dots,z_i,z_{i+2})}{z_{i+1}-z_{i+2}}.
$$

\begin{defi} 
The \emph{$k$th-multiple point space} is $D^k(f)=V(I_k(f))$, the zero locus in $(\C^{n+k-1},0)$ of the ideal $I_k(f)$.
\end{defi}

(We remark that the $k$th-multiple point space is denoted by $\widetilde D^k(f)$ instead of $D^k(f)$ in \cite{MM}).

If $f$ is stable, then, set-theoretically, $D^k(f)$ is the Zariski closure of the set of points
$(x,z_1,\ldots,z_{k})\in \C^{n+k-1}$ such that:
$$f(x,z_1)=\ldots=f(x,z_k),\quad z_i\neq z_j, \text{ for } i\neq j,$$
(see \cite{MM,GJ}).
But, in general, this may be not true if $f$ is not stable. For instance, consider the cusp $f:(\C,0)\to(\C^2,0)$ given by $f(z)=(z^2,z^3)$. Since $f$ is one-to-one, the closure of the double point set is empty, but
$$
D^2(f)=V(z_1+z_2,z_1^2+z_1z_2+z_2^2).
$$
This example also shows that the $k$th-multiple point space may be non-reduced in general.

The main result of Marar-Mond in \cite{MM} is that the $k$th-multiple point spaces can be used to characterize the stability and the finite determinacy of $f$.

%
%

\begin{teo}\label{ICIS}\cite[2.12]{MM} Let $f:(\C^n,0)\rightarrow(\C^{p},0)$ ($n<p$) be a finitely determined map germ of corank 1. Then: 
\begin{enumerate}
\item $f$ is stable if and only if $D^k(f)$ is smooth of dimension $p-k(p-n)$, or empty, for $k\geq 2$. 
\item $f$ is finitely determined if and only if for each $k$ with $p-k(p-n)\geq 0$, $D^k(f)$ is either an ICIS of dimension $p-k(p-n)$ or empty, and if, for those $k$ such that $p-k(p-n)<0$, $D^k(f)$ consists at most of the point $\{0\}$.
\end{enumerate}
\end{teo}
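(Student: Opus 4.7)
The plan is to combine Mather's infinitesimal stability criterion with Mather--Gaffney's geometric criterion for finite determinacy, using the corank 1 hypothesis to convert both into algebraic statements about the generators $\Delta_i^{(j)}$ of $I_k(f)$.

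First, the dimension count: since $I_k(f) \subset \mathcal{O}_{n+k-1}$ has $(k-1)(p-n+1)$ generators, the expected dimension of $D^k(f) = V(I_k(f))$ is $(n+k-1) - (k-1)(p-n+1) = p - k(p-n)$. Equality of this expected dimension with the actual one is equivalent to the $\Delta_i^{(j)}$ forming a regular sequence, that is, to $D^k(f)$ being a complete intersection. Next, I would observe that on the open locus where $z_1, \dots, z_k$ are pairwise distinct, multiplying the divided differences by the nonzero factors $z_a - z_b$ recovers the raw fiber-product equations $f_j(x, z_1) = \dots = f_j(x, z_k)$, so the $\Delta_i^{(j)}$ provide a holomorphic extension of the fiber-product equations across the big diagonal. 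This is what makes the divided differences the correct carriers of the scheme structure on multiple points in the corank 1 case.

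For part (1), I would invoke Mather's infinitesimal stability criterion: a $k$-fold multi-germ is stable iff the $k$-fold product map $f \times \dots \times f$ is transverse to the small diagonal $\Delta_p \subset (\C^p)^k$ at the relevant point. For a corank 1 parametrisation $f(x,z)=(x,f_n,\dots,f_p)$, the only nontrivial derivatives are with respect to the $z$-variables, and this transversality translates directly into maximal rank of the Jacobian of the $\Delta_i^{(j)}$ with respect to $(z_1,\dots,z_k)$, which is in turn smoothness of $D^k(f)$ of dimension $p-k(p-n)$. Applying this at every point of $D^k(f)$ gives (1). For part (2), by Mather--Gaffney $f$ is finitely determined iff $f$ is stable on a punctured neighbourhood of $0$; applying (1) pointwise away from the origin, this is equivalent to $D^k(f)\setminus\{0\}$ being smooth of the expected dimension for every $k \geq 2$. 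When $p-k(p-n)\geq 0$ the complete intersection structure together with isolated singularity at $0$ is precisely the ICIS definition; when $p-k(p-n)<0$, smoothness off the origin of a scheme of negative virtual dimension forces $D^k(f)\setminus\{0\}=\emptyset$, so $D^k(f)\subseteq \{0\}$.

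The main obstacle is the scheme-theoretic content of part (1): verifying that smoothness of $D^k(f)$ in the sense defined by $I_k(f)$ is equivalent (not merely set-theoretically but as schemes) to Mather's infinitesimal stability of the corresponding multi-germ. This requires checking that the iterated divided differences are not only the right set-theoretic equations but generate the ideal appearing in Mather's transversality condition, and that no reducedness pathology intervenes when several $z_i$'s coalesce. The corank 1 hypothesis is what rescues this: the single source variable $z$ with its $k$ copies encodes the full jet data of the multi-germ, so no mixed partials in additional source variables are needed to detect transversality, and the divided-difference construction becomes functorial and exact.
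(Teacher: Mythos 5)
This statement is not proved in the paper at all: it is quoted verbatim from Marar--Mond \cite[2.12]{MM}, so your attempt can only be compared with the original proof there. Your outer frame does match that proof: the dimension count $(n+k-1)-(k-1)(p-n+1)=p-k(p-n)$, the fact that correct dimension plus the $(k-1)(p-n+1)$ generators of $I_k(f)$ gives a complete intersection (valid since $\mathcal{O}_{n+k-1}$ is Cohen--Macaulay), and the deduction of part (2) from part (1) via the Mather--Gaffney criterion (finite determinacy equals stability off the origin, applied pointwise to the multigerms, with negative virtual dimension forcing $D^k(f)\setminus\{0\}=\emptyset$) are all exactly the architecture of Marar--Mond's argument.

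The genuine gap is in part (1), and you have in fact located it yourself in your final paragraph, but then asserted it away rather than proved it. Transversality of $f\times\dots\times f$ to the small diagonal $\Delta_p\subset(\C^p)^k$ is the correct reading of stability only at \emph{strict} multiple points, where the $z_i$ are pairwise distinct; there, multiplying the divided differences by the units $z_a-z_b$ does recover the fibre-product equations, as you say. But the entire difficulty of the theorem lives on the big diagonal, where several $z_i$ coalesce: at such a point the corresponding multigerm has a branch with local algebra $\C[t]/(t^{r})$, stability is Mather's multijet transversality to the relevant contact-class orbits (the $A_{r-1}$/Morin conditions), and it is \emph{not} a formal consequence of anything you wrote that this is equivalent to maximal rank of the Jacobian of the $\Delta_i^{(j)}$ there --- the product map restricted to the source diagonal always lands in $\Delta_p$, so naive diagonal-transversality degenerates precisely at these points. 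The paper's own cusp example $f(z)=(z^2,z^3)$, with $D^2(f)=V(z_1+z_2,\,z_1^2+z_1z_2+z_2^2)$ a non-reduced point sitting on the diagonal, shows that all the content is concentrated there. Marar--Mond close this gap by real work: they show $D^k$ commutes with unfoldings, compute the divided differences explicitly on the normal forms of stable corank 1 multigerms to establish smoothness of $D^k$ in the stable case, and analyse the schemes $D^k(f,\mathcal{P})$ at their generic points to get the converse direction of the ``if and only if''. Your proposal contains no substitute for these computations; the sentence ``the divided-difference construction becomes functorial and exact'' is a hope, not an argument, so as it stands the proof is incomplete exactly where the theorem is nontrivial.
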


The following construction is also due to Marar-Mond \cite{MM} and gives a refinement of the types of multiple points.

\begin{defi}
Let $\mathcal{P}=(r_1,\ldots,r_m)$ be a partition of $k$ (that is, $r_1+\ldots+r_m=k$). Let $I(\mathcal{P})$ be the ideal in $\mathcal{O}_{n-1+k}$ generated by the $k-m$ elements $z_i-z_{i+1}$ for $r_1,\ldots,r_{j-1}+1\leq i\leq r_1,\ldots,r_{j}-1$, $r\leq j\leq m$.
Define the ideal $I_k(f,\mathcal{P})=I_k(f)+I(\mathcal{P})$ and the \emph{$k$-multiple point space} of $f$ with respect to the partition $\mathcal{P}$ as $D^k(f,\mathcal{P})=V(I_k(f,\mathcal{P}))$.
\end{defi}

\begin{defi}
We define a \emph{generic point} of $D^k(f,\mathcal{P})$
as a point 
$$
(x,z_1,\dots,z_1,\dots,z_m,\dots,z_m),
$$ 
($z_i$ iterated $r_i$ times, and $z_i\ne z_j$ if $i\ne j$) such that the local algebra of $f$ at $(x,z_i)$ is isomorphic to $\C[t]/(t^{r_i})$, and such that
$$f(x,z_1)=\ldots=f(x,z_m).
$$
\end{defi}

If $f$ is stable, then $D^k(f,\mathcal{P})$ is equal to the Zariski closure of its generic points
(see \cite{MM}). Moreover, we have the following corollary, which extends Theorem \ref{ICIS} to the multiple point spaces with respect to the partitions.

\begin{co}\label{ICIS2}\cite[2.15]{MM} If $f$ is finitely determined (resp. stable), then for each partition $\mathcal{P}=(r_1,\ldots,r_m)$ of $k$ satisfying $p-k(p-n+1)+m\geq 0$, the germ of $D^k(f,\mathcal{P})$ at $\{0\}$ is either an ICIS (resp. smooth) of dimension $p-k(p-n+1)+m$, or empty. Moreover, those $D^k(f,\mathcal{P})$ for $\mathcal{P}$ not satisfying the inequality consist at most of the single point $\{0\}$.
\end{co}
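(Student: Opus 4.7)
The plan is to derive the corollary from Theorem \ref{ICIS} by realizing $D^k(f,\mathcal{P})$ as a linear slice of $D^k(f)$. Observe that $I(\mathcal{P})$ is generated by $k-m$ linear forms $z_i-z_{i+1}$, so $V(I(\mathcal{P}))\subset\C^{n+k-1}$ is a linear subspace $L_\mathcal{P}$ of codimension $k-m$. Hence $D^k(f,\mathcal{P})=D^k(f)\cap L_\mathcal{P}$, and the total number of generators of $I_k(f,\mathcal{P})$ is $(k-1)(p-n+1)+(k-m)$. The expected dimension of $D^k(f,\mathcal{P})$ in $\C^{n+k-1}$ is therefore
$$(n+k-1)-(k-1)(p-n+1)-(k-m)=p-k(p-n+1)+m,$$
which is exactly the dimension appearing in the statement.

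First I would treat the stable case. By Theorem \ref{ICIS}(1), $D^k(f)$ is smooth of dimension $p-k(p-n)$. The goal is to show that the linear subspace $L_\mathcal{P}$ cuts $D^k(f)$ transversely, so that $D^k(f,\mathcal{P})$ is smooth of the expected dimension. I would check this at an arbitrary generic point of $D^k(f,\mathcal{P})$: after identifying the variables $z_i$ in each block of the partition, the divided differences $\Delta_i^{(j)}$ degenerate into higher-order partial derivatives of $f_j$, and the resulting equations precisely describe the condition that the local algebra of $f$ at each point $(x,z_i)$ is $\C[t]/(t^{r_i})$. Using the normal form of a stable corank $1$ germ, a direct Jacobian computation shows that these equations together with the remaining divided differences form a regular sequence, establishing transversality.

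For the finitely determined case, finite determinacy implies that $f$ is stable away from the origin, and by the stable case just treated, $D^k(f,\mathcal{P})$ is smooth of the expected dimension $p-k(p-n+1)+m$ outside $\{0\}$. Hence its singular locus is contained in $\{0\}$, and since $I_k(f,\mathcal{P})$ is generated by exactly the right number of elements to cut out a scheme of the expected codimension, we conclude that $D^k(f,\mathcal{P})$ is either an ICIS of dimension $p-k(p-n+1)+m$ or empty. For a partition $\mathcal{P}$ violating $p-k(p-n+1)+m\geq 0$, the expected dimension is negative; by the stable case, outside the origin $D^k(f,\mathcal{P})$ would have to be smooth of negative dimension, which is impossible, so it must be empty there. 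Therefore $D^k(f,\mathcal{P})$ is supported at most at $\{0\}$.

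The main obstacle is the transversality assertion in the stable case: one must verify that, at each generic point of $D^k(f,\mathcal{P})$, the Jacobian matrix formed by the generators of $I_k(f)$ and the linear forms of $I(\mathcal{P})$ has maximal rank. This reduces to a careful bookkeeping of the divided differences after the identifications $z_i=z_{i+1}$ in each block of $\mathcal{P}$, together with an explicit use of the stable normal form, which is precisely the content of the analogous argument in \cite{MM}.
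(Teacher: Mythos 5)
The paper does not prove this statement: it is quoted directly from Marar--Mond \cite[2.15]{MM}, so your attempt can only be compared with their argument. Your overall skeleton does match it: the dimension count $(n+k-1)-(k-1)(p-n+1)-(k-m)=p-k(p-n+1)+m$ is correct; the reduction of the finitely determined case to the stable case via Mather--Gaffney (a finitely determined germ has a representative whose multigerms away from the origin are all stable) is the right move; and the conclusion that a variety which is smooth of the expected dimension off $0$ and is cut out by exactly $\operatorname{codim}$ many equations is an ICIS or empty (Krull's height theorem plus Cohen--Macaulayness of $\mathcal{O}_{n+k-1}$ to get a regular sequence), with emptiness off $0$ when the expected dimension is negative, is sound.

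The genuine gap is in your stable case: you verify transversality of the linear slice $L_{\mathcal P}$ to $D^k(f)$ \emph{only at generic points} of $D^k(f,\mathcal P)$, and that does not suffice. The corollary asserts smoothness of the germ at $0$, and your finitely determined step needs smoothness of $D^k(f,\mathcal P)$ at \emph{every} point $w\neq 0$; such a $w$ is in general not a generic point for $\mathcal P$, since the $z_i$ belonging to different blocks may collide, i.e.\ $w$ lies in a deeper $D^k(f,\mathcal Q)$. A variety that is the closure of its generic points can perfectly well be singular at the limit points, so nothing you have checked rules this out. Concretely, for the stable cross-cap $f(x,z)=(x,z^2,xz)$ one has $D^2(f,(1,1))=D^2(f)=V(x,z_1+z_2)$, and the origin belongs to it but is \emph{not} a generic point of the partition $(1,1)$; smoothness there is part of the claim, and your computation never addresses such clustered points. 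Marar--Mond's proof does not restrict to generic points at all: a local fibre-product decomposition reduces an arbitrary point of $D^k$ of a stable multigerm to the origin of the Morin normal form $F(u,z)=\bigl(u,\,z^{r}+\sum_i u_{1i}z^{i},\dots\bigr)$, whose divided differences are linear in the unfolding parameters, with $\Delta_l(z^i)$ equal to the complete homogeneous symmetric function $h_{i-l}(z_1,\dots,z_{l+1})$, which at a clustered point equals $1$ for $i=l$ and $0$ otherwise; the implicit function theorem then exhibits $D^k(F,\mathcal P)$ as a graph over the remaining variables, hence smooth at every point, generic or not. To repair your proof you must replace ``check at an arbitrary generic point'' by an argument of this kind valid at arbitrary (in particular fully clustered) points.
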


The next step is to observe that the $k$th-multiple point space $D^k(f)$ is invariant under the action of the $k$th symmetric group $S_k$.

\begin{defi}
Let $M$ be a $\mathbb{Q}$-vector space upon which $S_k$ acts. Then the \emph{alternating part} of $M$, denoted by $\text{Alt}_k M$, is defined to be 
$$\text{Alt}_k M:=\{m\in M : \sigma(m)=\mbox{sign}(\sigma)m, \text{ for all } \sigma\in S_k\}.$$
\end{defi}

The following theorem of Goryunov-Mond in \cite{GM} allows us to compute the image Milnor number of $f$ by means of a spectral sequence associated to the multiple point spaces. 

\begin{teo}\cite[2.6]{GM} \label{Hn} Let $f:(\C^{n},0)\rightarrow (\C^{n+1},0)$ be a corank 1 map germ and $f_s$ a stabilisation of $f$, for $s\neq 0$ and $X_s$ the image of $f_s$. Then,
$$H_n(X_s,\mathbb{Q})\cong \bigoplus_{k=2}^{n+1} \mbox{\emph{Alt}}_k(H_{n-k+1}(D^k(f_s),\mathbb{Q})).$$
\end{teo}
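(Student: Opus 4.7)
The natural tool is an image-computing spectral sequence (ICSS) adapted to corank $1$ maps. The plan is to resolve the image $X_s$ via the multiple-point spaces $D^k(f_s)$ and to combine the $S_k$-action with the ICIS structure guaranteed by Theorem~\ref{ICIS}.

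First, I would set up an ICSS for the finite map $f_s\colon B\to X_s$, where $B$ is a suitable representative of the source ball. The iterated fibre products $B^{\times_{X_s}k}$ carry a natural $S_k$-action permuting factors, and the standard Vassiliev--Goryunov machinery produces a spectral sequence converging to $\tilde H_{p+q}(X_s,\mathbb{Q})$ whose $E_1$-page is built from the $S_k$-equivariant homology of these fibre products. For a corank $1$ germ, the Marar--Mond divided-difference embedding $D^k(f_s)\subset\C^{n+k-1}$ realises exactly the alternating piece of the fibre product: diagonal components on which $S_k$ acts with fixed loci are annihilated by $\text{Alt}_k$, while the remaining strata are captured precisely by $D^k(f_s)$. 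After identifying the two $S_k$-actions this yields
\begin{equation*}
E_1^{p,q} = \text{Alt}_{p+1}\bigl(H_q(D^{p+1}(f_s),\mathbb{Q})\bigr) \Longrightarrow \tilde H_{p+q}(X_s,\mathbb{Q}).
\end{equation*}

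Next, since $f_s$ is stable, Theorem~\ref{ICIS}(1) gives that each $D^k(f_s)$ is smooth of dimension $n+1-k$ and is the Milnor fibre of the ICIS germ $D^k(f)$ of the same dimension. The classical result on Milnor fibres of ICIS then says $D^k(f_s)$ has the homotopy type of a bouquet of $(n+1-k)$-spheres, so $\tilde H_q(D^k(f_s),\mathbb{Q})$ vanishes unless $q=n+1-k$. This concentration forces every nonzero $E_1$-term to lie on the single antidiagonal $p+q=n$, namely
\begin{equation*}
E_1^{k-1,\,n-k+1}=\text{Alt}_k\bigl(H_{n-k+1}(D^k(f_s),\mathbb{Q})\bigr) \quad \text{for } 2\le k\le n+1,
\end{equation*}
while the $k=1$ column is $\tilde H_*(B)=0$. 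All higher differentials $d_r$ cross antidiagonals and therefore vanish, so $E_1=E_\infty$; working over $\mathbb{Q}$ there is no extension problem, and collecting the surviving terms gives the desired decomposition of $H_n(X_s,\mathbb{Q})$.

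The main obstacle is the first step: rigorously constructing the spectral sequence and identifying its $E_1$-page with the alternating homology of the $D^k(f_s)$. One must relate the iterated fibre product $B\times_{X_s}\cdots\times_{X_s}B$, the natural object underlying any ICSS, to the divided-difference space of Marar--Mond, and verify that the two $S_k$-actions agree modulo the symmetric part that is killed by $\text{Alt}_k$; here one uses crucially that in corank $1$ the equality $f_s(x_1,z_1)=f_s(x_2,z_2)$ forces $x_1=x_2$, so the fibre product is already embedded in $\C^{n+k-1}$. Once this identification is in place, the degeneration of the spectral sequence and the explicit decomposition follow formally from the dimensional concentration of the ICIS Milnor fibres $D^k(f_s)$.
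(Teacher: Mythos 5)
The paper gives no proof of this statement at all---it is imported verbatim from Goryunov--Mond \cite{GM}---and your sketch reconstructs exactly their argument: the image-computing spectral sequence whose $E_1$-page is the alternating homology $\text{Alt}_{k}H_q(D^{k}(f_s),\mathbb{Q})$ of the Marar--Mond multiple point spaces, which degenerates at $E_1$ because each $D^k(f_s)$ is the Milnor fibre of an ICIS of dimension $n+1-k$ and so has reduced homology concentrated in that single degree, placing all nonzero terms on the antidiagonal $p+q=n$. Your outline is correct, and you rightly identify the genuine technical core (constructing the spectral sequence and proving the alternating triviality of the diagonal contributions in the fibre products, using the corank~1 normal form) as the part that occupies the body of \cite{GM} rather than anything proved in the present paper.
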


Note that since $X_s$ has the homotopy type of a wedge of $n$-spheres, the image Milnor number of $f$ is the rank of $H_n(X_s,\mathbb{Q})$. If we consider $H_n(X_s,\mathbb{Q})$ as a $\mathbb{Q}$-vector space, 
$$\mu_I(f)=\dim_{\mathbb{Q}}H_n(X_s,\mathbb{Q}).
$$
So, by Theorem \ref{Hn}, the image Milnor number is 
$$
\mu_I(f)=\sum_{k=2}^{n+1} \dim_{\mathbb{Q}}\mbox{Alt}_k(H_{n-k+1}(D^k(f_s),\mathbb{Q})).
$$
By \cite[Corollary 2.8]{H}, we can compute the alternating Euler characteristic of $D^k(f_s)$ as follows: for each partition $\mathcal P$, we set
$$\beta(\mathcal P)=\frac{\text{sign}(\mathcal{P})}{\prod_{i}\alpha_i!{i}^{\alpha_i}} ,
$$
where $\alpha_i:=\#\{j:\ r_j=i\}$ and 
$\mbox{sign}(\mathcal{P})$ is the number $(-1)^{k-\sum_{i}\alpha_i}$. Then, 
$$\chi^{alt}(D^k(f_s))=\sum_{|\mathcal{P}|=k}\beta(\mathcal{P})\chi(D^k(f_s,\mathcal{P})).$$ 

Moreover, by Theorem \ref{ICIS} and Corollary \ref{ICIS2}, $D^k(f_s)$ (resp. $D^k(f_s,\mathcal{P})$) is a Milnor fibre of the ICIS $D^k(f)$ (resp. $D^k(f,\mathcal{P})$), and hence it has the homotopy type of a wedge of spheres of real dimension $\dim D^k(f)=n-k+1$ (resp. $\dim D^k(f,\mathcal{P})$). Thus, 
$$
\dim_{\mathbb{Q}}\mbox{Alt}_k(H_{n-k+1}(D^k(f_s),\mathbb{Q}))=(-1)^{n-k+1} \chi^{alt}(D^k(f_s)),
$$
and
$$\chi(D^k(f_s,\mathcal{P}))=1+(-1)^{\dim D^k(f,\mathcal{P})}\mu(D^k(f,\mathcal{P})).$$
This gives the following strong version of Marar's formula \cite{M}:
\begin{equation}
\mu_I(f)=\sum_{k=2}^{n+1}(-1)^{n-k+1}\sum_{|\mathcal{P}|=k}\beta(\mathcal P)(1+(-1)^{\dim D^k(f,\mathcal{P})}\mu(D^k(f,\mathcal{P}))),\label{eq}\end{equation}
where the coefficients $\beta(\mathcal{P})=0$ when the sets $D^k(f,\mathcal{P})$ are empty, for $k=2,\ldots,n+1$.

\section{Lê-Greuel type formula}

Let $f:(\C^n,0)\rightarrow(\C^{n+1},0)$ be a corank 1 finitely determined map germ. 
Let $p:\C^{n+1}\to\C$ be a generic linear projection such that $H=p^{-1}(0)$ is a generic hyperplane through the origin in $\C^{n+1}$. We can choose linear coordinates in $\C^{n+1}$ such that $p(y_1,\dots,y_{n+1})=y_1$. Then, we choose the coordinates in $\C^n$ in such a way that $f$ is written in the form
$$f(x_1,\ldots,x_{n-1},z)=(x_1,\ldots,x_{n-1},h_1(x_1,\ldots,x_{n-1},z),h_2(x_1,\ldots,x_{n-1},z)),$$
for some holomorphic functions $h_1,h_2$. We see $f$ as a 1-parameter unfolding of the map germ $g:(\C^{n-1},0)\rightarrow(\C^{n},0)$ given by 
$$
g(x_2,\ldots,x_{n-1},z)=(x_2,\ldots,x_{n-1},h_1(0,x_2,\ldots,x_{n-1},z),h_2(0,x_2,\ldots,x_{n-1},z)).
$$
We say that $g$ is the transverse slice of $f$ with respect to the generic hyperplane $H$. If $f$ has image $(X,0)$ in $(\C^{n+1},0)$, then the image of $g$ in $(\C^{n},0)$ is isomorphic to $(X\cap H,0)$.

We take $f_s$ a stabilisation of $f$ and denote by $X_s$ the image of $f_s$ (see \cite{VC} for the definition of stabilisation). Since $f$ has corank 1, $X_s$ has a natural Whitney stratification given by the stable types of $f_s$. In fact, the strata are the submanifolds 
$$
M^k(f_s,\mathcal P):=\epsilon^k(D^k(f_s,\mathcal P)^0)\setminus \epsilon^{k+1}(D^{k+1}(f_s)),
$$
where $D^k(f_s,\mathcal P)^0$ is the set of generic points of $D^k(f_s,\mathcal P)$, $\epsilon^k:\C^{n+k-1}\to\C^{n+1}$ is the map $(x,z_1,\dots,z_k)\mapsto f_s(x,z_1)$ and $\mathcal P$ runs through all the partitions of $k$ with $k=2,\dots,n+1$. We can choose the generic linear projection $p:\C^{n+1}\to\C$ in such a way that the restriction to each stratum $M^k(f_s,\mathcal P)$ is a Morse function. In other words, such that the restriction $p|_{X_s}:X_s\to \C$ is a stratified Morse function in the sense of \cite{GorMac}. We will denote by $\#\Sigma(p{|_{X_s}})$ the number of critical points as a stratified Morse function. Our first result in this section is for the case of a plane curve.

\begin{teo}\label{formula1}
Let $f:(\C,0)\rightarrow(\C^{2},0)$ be a corank 1 finitely determined map germ. Let $p:\C^{2}\rightarrow \C$ be a generic linear projection, then  
$$
\#\Sigma(p{|_{X_s}})=\mu_{I}(f)+m_0(f)-1,
$$
where $m_0(f)$ is the multiplicity of $f$.
\end{teo}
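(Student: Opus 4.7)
Since $n=1$, stability of $f_s:(\C,0)\to(\C^2,0)$ means that $f_s$ is an immersion with only transverse double points, so $X_s$ is a plane curve whose only singularities are $\delta$ ordinary nodes. The Whitney stratification of $X_s$ by stable types therefore has exactly two strata: the open smooth stratum $M^1$ and the finite set $M^2$ of the $\delta$ nodes. The plan is to count $\#\Sigma(p|_{X_s})$ by analysing the contributions of the two strata separately.

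First I would show that $\delta=\mu_I(f)$. The image $X_s$ is obtained from the disk $D$ on which $f_s$ is defined by identifying $\delta$ pairs of distinct points, so $\chi(X_s)=1-\delta$; on the other hand $X_s$ has the homotopy type of a wedge of $\mu_I(f)$ circles, hence $\chi(X_s)=1-\mu_I(f)$. Since every point of a $0$-dimensional stratum is automatically a stratified critical point, the $\mu_I(f)$ nodes contribute exactly $\mu_I(f)$ critical points to $\#\Sigma(p|_{X_s})$.

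Next I would pull back the restriction $p|_{M^1}$ via the biholomorphism $f_s:D\setminus f_s^{-1}(M^2)\to M^1$. This identifies the Morse critical points of $p|_{M^1}$ with the critical points of $p\circ f_s:D\to\C$ lying outside the $2\delta$ preimages of the nodes. For a generic linear form $p$ the composition $p\circ f$ has order exactly $m_0(f)$ at the origin, so $p\circ f(z)=z^{m_0(f)}u(z)$ with $u(0)\neq 0$; the one-variable germ $z^{m_0(f)}$ has Milnor number $m_0(f)-1$, so for a sufficiently generic stabilisation $p\circ f_s$ becomes a holomorphic Morse function on $D$ with exactly $m_0(f)-1$ simple critical points.

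The main obstacle is the genericity step: one must choose $p$ and $f_s$ so that all $m_0(f)-1$ critical points of $p\circ f_s$ remain in $D\setminus f_s^{-1}(M^2)$, and moreover so that their critical values are distinct from one another and from the values of $p$ at the nodes, so that $p|_{X_s}$ is a bona fide stratified Morse function. Granting this standard transversality argument, adding the two contributions yields
$$
\#\Sigma(p|_{X_s})=\mu_I(f)+\bigl(m_0(f)-1\bigr),
$$
as required.
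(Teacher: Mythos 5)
Your proposal is correct and takes essentially the same approach as the paper: decompose $\#\Sigma(p|_{X_s})$ into the contribution of the $0$-dimensional stratum of nodes, whose count $\delta$ equals $\mu_I(f)$, plus the Morse critical points of $p\circ f_s$ on the smooth stratum, counted by $\mu(p\circ f)=m_0(f)-1$ since $p\circ f$ has order $m_0(f)$ for generic linear $p$. The only (harmless) differences are that you prove $\delta=\mu_I(f)$ directly by an Euler-characteristic count where the paper cites Mond's theorem for plane curves, and you make explicit the genericity requirements (critical points of $p\circ f_s$ avoiding the node preimages, distinct critical values) that the paper's proof leaves implicit.
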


\begin{proof}
After a change of coordinates, we can assume that
$$f(t)=(t^k,c_mt^m+c_{m+1}t^{m+1}+\dots),
$$ 
where $k=m_0(f)$, $m>k$ and $c_m\ne0$. The stabilisation $f_s$ is an immersion with only transverse double points. So, its image $X_s$ has only two strata: $M^2(f_s,(1,1))$ is a $0$-dimensional stratum composed by the transverse double points and $M^1(f_s,(1))$ is a $1$-dimensional stratum given by the smooth points of $X_s$. Note that the number of double points of $f_s$ is the delta invariant of the plane curve, $\delta(X,0)$, which is equal to $\mu_I(f)$ by  \cite[Theorem 2.3]{LB}.

Let $p:\C^{2}\rightarrow \C$ be a generic linear projection such that $p|_{X_s}$ is a stratified Morse function. 
Then: 
$$\#\Sigma(p{|_{X_s}})=\#M^2(f_s,(1,1))+\#\Sigma(p{|_{M^1(f_s,(1))}})=
\mu_I(f)+\#\Sigma(p{|_{M^1(f_s,(1))}}).
$$
Since $f_s$ is a diffeomorphism on the stratum $M^1(f_s,(1))$, the number of critical points of $p{|_{M^1(f_s,(1))}}$ is equal to the number of critical points of $p\circ f_s$. Assume that $p(x,y)=Ax+By$ with $A\ne0$. Then $p\circ f_s$ is a Morsification of the function
$$
p\circ f(t)=A t^k+B(c_mt^m+c_{m+1}t^{m+1}+\ldots)
$$
The number of critical points of $p\circ f_s$ is equal to $\mu(p\circ f)=k-1=m_0(f)-1$, which proves our formula.
\end{proof}

Next, we state and prove the formula for the case $n>1$.

\begin{teo}\label{formula}
Let $f:(\C^n,0)\rightarrow(\C^{n+1},0)$ be a corank 1 finitely determined map germ with $n>1$. Let $p:\C^{n+1}\rightarrow \C$ be a generic linear projection which defines a transverse slice $g:(\C^{n-1},0)\rightarrow(\C^n,0)$. Then,
$$
\#\Sigma(p{|_{X_s}})=\mu_{I}(f)+\mu_I(g).
$$
\end{teo}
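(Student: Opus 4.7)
The plan is to combine Mond's bouquet theorem with a stratified-Morse-theoretic computation of Euler characteristics. I would first identify the disentanglement $Y_s$ of $g$ with a generic fibre of $p|_{X_s}$: for a stabilisation $f_s$ of $f$ and generic $t$ close to $0$, the restriction $f_s|_{\{x_1=t\}}$ is a stabilisation of $g$, whose image is $X_s\cap\{p=t\}$. Mond's bouquet theorem then yields $\chi(X_s)=1+(-1)^n\mu_I(f)$ and $\chi(X_s\cap\{p=t\})=\chi(Y_s)=1+(-1)^{n-1}\mu_I(g)$. Note that this generic fibre, rather than $X_s\cap H$ itself, is what carries the homotopy type of the disentanglement of $g$; the fibre at $t=0$ may differ (for instance in the cross-cap example the cuspidal slice is contractible while the nearby slice is a nodal curve).

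The crux is the stratified-Morse identity
$$
\chi(X_s)-\chi(X_s\cap\{p=t\})=(-1)^n\,\#\Sigma(p|_{X_s})
$$
for generic $t$. Each critical point of the stratified Morse function $p|_{X_s}$ should contribute $(-1)^n$ to this Euler-characteristic difference, by analogy with classical complex Morse theory (where each critical point attaches a thimble of real dimension $n$). On the top stratum $M^1(f_s,(1))$ this is immediate; on a lower stratum $M^k(f_s,\mathcal{P})$ it follows from Goresky-MacPherson's stratified Morse theory combined with the ICIS structure of the multiple-point space $D^k(f,\mathcal{P})$, the normal Morse data being a cone on the complex link whose Euler characteristic matches that of the Milnor fibre of the corresponding ICIS.

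Combining the two yields $(-1)^n(\mu_I(f)+\mu_I(g))=(-1)^n\,\#\Sigma(p|_{X_s})$, and the theorem follows. The hardest step is the stratified-Morse identity, in particular the uniform $(-1)^n$ contribution at critical points on lower-dimensional strata. An alternative, more computational route uses Marar's formula~\eqref{eq} applied to both $\mu_I(f)$ and $\mu_I(g)$, together with the classical Lê-Greuel formula~\eqref{Le-Greuel} applied to each ICIS $D^k(f,\mathcal{P})$ (noting $D^k(g,\mathcal{P})=D^k(f,\mathcal{P})\cap\{x_1=0\}$, and that $\epsilon^k:D^k(f_s,\mathcal{P})^0\to M^k(f_s,\mathcal{P})$ is a $\prod_i\alpha_i!$-fold covering); the resulting combinatorial sum, after the $\beta(\mathcal{P})$-signs and the covering multiplicities combine, should reorganise globally into $\#\Sigma(p|_{X_s})$, with the boundary critical points of $D^k(f_s,\mathcal{P})^0$ matching the critical points that sit in lower strata of $X_s$.
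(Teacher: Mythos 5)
Your main route is genuinely different from the paper's proof (the paper never touches $\chi(X_s)$: it expands $\mu_I(f)+\mu_I(g)$ by Marar's formula \eqref{eq}, applies the classical Lê--Greuel formula to each ICIS pair $D^k(f,\mathcal{P})$ and $D^k(g,\mathcal{P})=D^k(f,\mathcal{P})\cap\tilde{p}^{-1}(0)$, and transfers the resulting counts to the strata of $X_s$ through the covering $\epsilon^k$), but it has a genuine gap exactly at the step you yourself flag as hardest. By Goresky--MacPherson, a critical point of $p|_{X_s}$ on a stratum $M^k(f_s,\mathcal{P})$ of complex dimension $d$ contributes $(-1)^d\bigl(1-\chi(\mathcal{L})\bigr)$ to $\chi(X_s)-\chi\bigl(X_s\cap p^{-1}(t)\bigr)$, where $\mathcal{L}$ is the complex link of that stratum in $X_s$. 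Your uniform $(-1)^n$ contribution is therefore equivalent to $\widetilde{\chi}(\mathcal{L})=(-1)^{n-d-1}$ for \emph{every} stable stratum, i.e.\ to the statement that the complex link of each stable type is, Euler-characteristically, a single $(n-d-1)$-sphere --- equivalently, that the transverse slice of every stable (multi-)germ has image Milnor number $1$. This is a nontrivial theorem about stable images, and your proposed justification does not deliver it: the complex link of $M^k(f_s,\mathcal{P})$ is \emph{not} the Milnor fibre of the ICIS $D^k(f,\mathcal{P})$; it is the disentanglement of the transverse slice of the local stable type, and by Theorem \ref{Hn} its homology mixes the alternating homology of \emph{all} multiple point spaces of that slice, so there is no direct ``matching'' with a single ICIS Milnor fibre. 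One can verify the claim by hand for cross-caps, triple points and the double-point curve (each gives $(-1)^n$), but proving it for all stable types in all dimensions is a separate piece of work of essentially the same depth as the theorem itself; the paper's computational proof is designed precisely to avoid this local complex-link analysis.

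Your alternative sketch is indeed the paper's actual argument in outline, but two details as you state them would make the combinatorics fail to close. First, the covering $\epsilon^k:D^k(f_s,\mathcal{P})^0\to M^k(f_s,\mathcal{P})$ has degree $\prod_i\alpha_i!\,i^{\alpha_i}$, not $\prod_i\alpha_i!$; the factor $i^{\alpha_i}$ is exactly what cancels the denominator of $\beta(\mathcal{P})$, together with the sign identity $(-1)^{\dim D^k(f)}\operatorname{sign}(\mathcal{P})(-1)^{\dim D^k(f,\mathcal{P})}=1$. Second, the mechanism for lower-strata critical points is not a matching with ``boundary critical points of $D^k(f_s,\mathcal{P})^0$'': in the paper, genericity of $p$ guarantees that \emph{all} critical points of $\tilde{p}|_{D^k(f_s,\mathcal{P})}$ are generic points with image in $M^k(f_s,\mathcal{P})$, while the critical points of $p|_{X_s}$ lying in a lower stratum are produced by the term of the corresponding partition itself --- in particular the $0$-dimensional spaces $D^k(f,\mathcal{P})$ must be treated separately (there $D^k(g,\mathcal{P})=\emptyset$, Lê--Greuel does not apply, and one uses $\mu=\deg-1$ so that the degree counts the points of the $0$-dimensional strata). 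Your first displayed reduction, $(-1)^n\bigl(\mu_I(f)+\mu_I(g)\bigr)=\chi(X_s)-\chi\bigl(X_s\cap p^{-1}(t)\bigr)$, and the identification of the generic fibre with a disentanglement of $g$ are sound (modulo a short genericity argument for the slice of $f_s$ being a stable perturbation of $g$), so the proposal stands or falls with the unproved uniform local contribution.
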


\begin{proof}
By Marar's formula (\ref{eq}):
\begin{align*}
\mu_I(f)+\mu_I(g)&=\sum_{k=2}^{n+1}(-1)^{n-k+1}\sum_{|\mathcal{P}|=k}\beta(\mathcal P)(1+(-1)^{\dim D^k(f,\mathcal{P})}\mu(D^k(f,\mathcal{P})))\\
&+\sum_{k=2}^{n}(-1)^{n-k}\sum_{|\mathcal{P}|=k}\beta(\mathcal P)(1+(-1)^{\dim D^k(g,\mathcal{P})}\mu(D^k(g,\mathcal{P})))\\
\end{align*}
Note that if $\dim D^k(f,\mathcal{P})>0$, then $\dim D^k(f,\mathcal{P})=1+\dim D^k(g,\mathcal{P})$. Otherwise, if $\dim D^k(f,\mathcal{P})=0$, then $D^k(g,\mathcal{P})=\emptyset$. So, we can separate the formula into two parts, the first one for partitions with $\dim D^k(f,\mathcal{P})=0$, the second one for partitions with $\dim D^k(f,\mathcal{P})>0$. Thus,
\begin{align*}
&\mu_I(f)+\mu_I(g)=\sum_{k=2}^{n+1}(-1)^{n+k-1}\sum_{\substack{|\mathcal{P}|=k \\ \dim D^k(f,\mathcal{P})=0}}\beta(\mathcal P)(1+\mu(D^k(f,\mathcal{P})))\\
&+\sum_{k=2}^{n}(-1)^{n+k-1}\sum_{\substack{|\mathcal{P}|=k \\ \dim D^k(f,\mathcal{P})>0}}\beta(\mathcal P)(-1)^{\dim D^k(f,\mathcal{P})}(\mu(D^k(f,\mathcal{P}))+\mu(D^k(g,\mathcal{P})))
\end{align*}
If $\dim D^k(f,\mathcal{P})=0$, the Milnor number of $D^k(f,\mathcal{P})$ is $$\mu(D^k(f,\mathcal{P}))=deg(D^k(f,\mathcal{P}))-1,$$ where $deg$ is the degree of the map germ that defines the 0-dimensional ICIS $D^k(f,\mathcal{P})$. 

We choose the coordinates such that $p(y_1,\dots,y_{n+1})=y_{1}$. We denote by $\tilde{p}:\C^{n+k-1}\rightarrow \C$ the projection onto the first coordinate. Then:
$$
D^k(g,\mathcal{P})=D^k(f,\mathcal{P})\cap \tilde p^{-1}(0).
$$
By the Lê-Greuel formula for ICIS \cite{GRE,LE},
$$
\mu(D^k(f,\mathcal{P}))+\mu(D^k(g,\mathcal{P}))=\# \Sigma(\tilde{p}{|_{D^k(f_s,\mathcal{P})}}).
$$

It is easy to check that $(-1)^{\dim D^k(f)} \text{sign}(\mathcal{P})(-1)^{\dim D^k(f,\mathcal{P})}=1$ for any partition $\mathcal P$. Thus, we get:
\begin{align*}
\mu_I(f)+\mu_I(g)&=\sum_{k=2}^{n+1}\sum_{\substack{|\mathcal{P}|=k \\ \dim D^k(f,\mathcal{P})=0}}\frac{deg(D^k(f,\mathcal{P}))}{\prod_{i}\alpha_i!{i}^{\alpha_i}}\\
&+\sum_{k=2}^{n}\sum_{\substack{|\mathcal{P}|=k \\ \dim D^k(f,\mathcal{P})>0}}\frac{\# \Sigma(\tilde{p}{|_{D^k(f_s,\mathcal{P})}})}{\prod_{i}\alpha_i!{i}^{\alpha_i}}.\\
\end{align*}

For any partition $\mathcal P$, the map $\epsilon^k:D^k(f_s,\mathcal{P})^0\to \C^{n+1}$ is a covering map of degree $\prod_{i}\alpha_i!{i}^{\alpha_i}$ onto its image. If $\dim D^k(f,\mathcal{P})=0$, then all the points of $D^k(f_s,\mathcal{P})$ are generic and moreover, $\epsilon^k(D^k(f_s,\mathcal{P}))$ does not contain points of $\epsilon^{k+1}(D^{k+1}(f_s))$, so
$$
deg(D^k(f,\mathcal{P}))=\# D^k(f_s,\mathcal P)=(\prod_{i}\alpha_i!{i}^{\alpha_i})\# M^k(f_s,\mathcal P).
$$
Otherwise, if $\dim D^k(f,\mathcal{P})>0$, then by the genericity of the projection we can assume that all the critical points of $\tilde{p}{|_{D^k(f_s,\mathcal{P})}}$ are generic points of $D^k(f_s,\mathcal{P})$ and whose image is contained in $M^k(f_s,\mathcal P)$, hence:
$$
\# \Sigma(\tilde{p}{|_{D^k(f_s,\mathcal{P})}})
=(\prod_{i}\alpha_i!{i}^{\alpha_i})\# \Sigma(p{|_{M^k(f_s,\mathcal P)}}).
$$

Thus, we conclude that
\begin{align*}
\mu_I(f)+\mu_I(g)&=\sum_{k=2}^{n+1}\sum_{\substack{|\mathcal{P}|=k \\ \dim D^k(f,\mathcal{P})=0}}\# M^k(f_s,\mathcal{P})\\
&+\sum_{k=2}^{n}\sum_{\substack{|\mathcal{P}|=k, \\ \dim D^k(f,\mathcal{P})>0}}\# \Sigma(p{|_{M^k(f_s,\mathcal{P})}}),
\end{align*}
which is nothing but the number of critical points of the stratified Morse function $p|_{X_s}$.
\end{proof}

\section{Examples}
In this section, we give some examples to illustrate the formulas of theorems \ref{formula1} and \ref{formula}.

\begin{ex} (The singular plane curve $E_6$)


 Let $f(z)=(z^3,z^4)$ be the singular plane curve $E_6$, let $f_s(z)=(z^3 +  sz, z^4 + \frac{5}{4} s z^2)$ be a stabilisation of $f$, for $s\neq 0$.

 \begin{figure}[ht]
\centering
\includegraphics[scale=0.4]{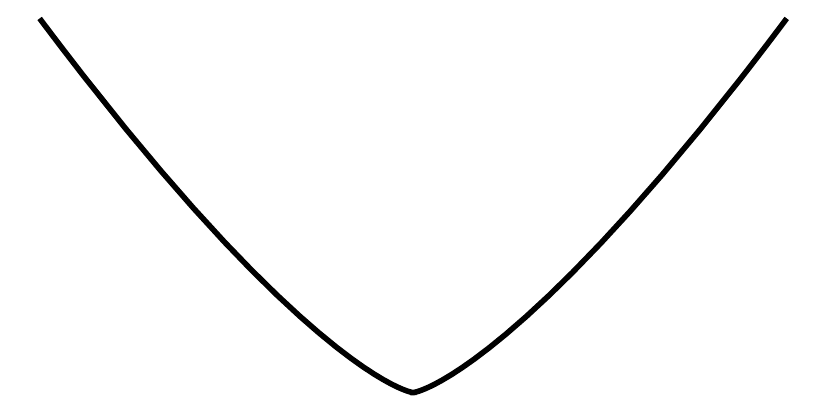}\hspace{1cm}\includegraphics[scale=0.4]{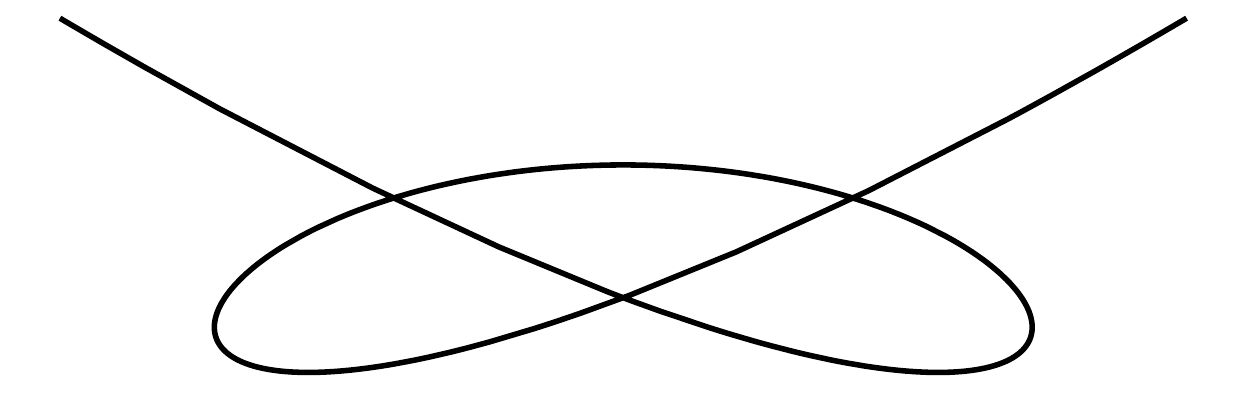}
\caption{The curve $E_6$ and its stabilisation for $s<0$}
\end{figure}

Let $M^2(f_s,(1,1))$ be the $0$-dimensional stratum of $X_s$. It is composed by three points, they correspond to three double transversal points. Let $M^1(f_s,(1))$ be the $1$-dimensional stratum. If we compose $f_s$ with $p(z,u)=z$ there are two critical points in a neighbourhood of the origin, so $\sharp\sum p_{|_{X_s}}=5.$

\begin{figure}[ht]
\centering
\includegraphics[scale=0.4]{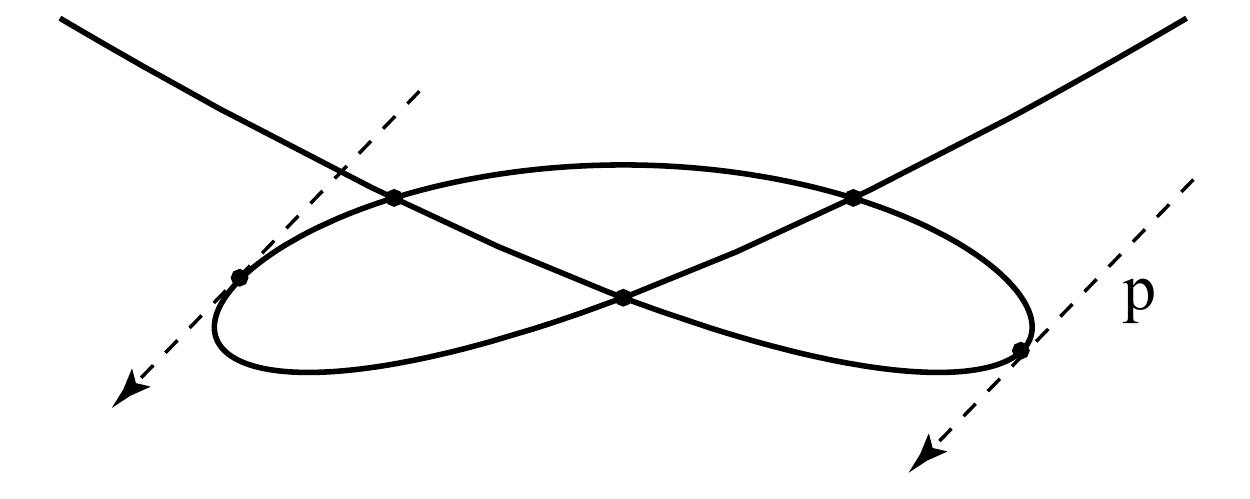}
\caption{Critical points in $X_s$}
\end{figure}

Now, since the multiplicity of $f$, $m_0(f)=3$ and the image Milnor number of $f$ is $\mu_I(f)=3$, $\mu_I(f)+m_0(f)-1=5$. We conclude that the formula is true.
\end{ex}

When $n>1$, we proceed in the following way:
Let $f:(\C^n,0)\rightarrow (\C^{n+1},0)$ be a corank 1 finitely determined map germ written as
$$f(x,z)=(x,h_1(x,z),h_2(x,z)),\ x\in\C^{n-1},\ z\in\C.
$$ 
Let $f_s$ be a stabilisation of $f$. The image of $f_s$ is denoted by $X_s$. First, we calculate the number of critical points of the restriction of $p$ to $X_s$, for the generic linear projection $p(y_1,\dots,y_{n+1})=y_1$. We separate the image set $X_s$ in strata of different dimesnions given by stable types, which correspond to the sets $M^k(f_s,\mathcal{P})$. The $n$-dimensional stratum, $M^1(f_s,(1))$, is composed by the regular part of $f_s$. So, the restriction $p{|_{M^1(f_s)}}$ has not critical points.  

The $(n-1)$-dimensional stratum is composed by $M^2(f_s,(1,1))$. To calculate the critical points, we will work with the inverse image by $\epsilon^2$, that is, $D^2(f_s,(1,1))=D^2(f_s)$. The double point space $D^2(f_s)$ is a subset of $\C^{n+1}$, but we take a projection of $D^2(f_s)$ in the first $n$ variables. So, we denote by $D(f_s)$ the projection of double point space in $\C^n$.
The double point space $D(f_s)$ is a hypersurface in $\C^n$ given by the resultant of $P_s$ and $Q_s$ with respect to $z_2$, where $P_s= \frac{h_{1,s}(x,z_2)-h_{1,s}(x,z_1)}{z_2-z_1}$ and $Q_s= \frac{h_{2,s}(x,z_2)-h_{2,s}(x,z_1)}{z_2-z_1}$. This gives the defining equation of $D(f_s)$, denoted by $\lambda_s(x,z)=0$.

To calculate the critical points of the set $D(f_s)$ we take the linear projection $\tilde{p}(x_1,\ldots,x_{n-1},z)=x_1$. 
Note that the hypersuface $D(f_s)$ also contains the critical points of the other $k$-dimensional strata, with $k<n-1$. Then, it will be sufficient to compute critical points here, in order to have all the critical points. 
By definition, we say that $(x_1,\ldots,x_{n-1},z)$ is a critical point of $\tilde{p}_{|_{D(f_s)}}$ if $\lambda_s(x,z)=0$ and $J(\lambda_s,\tilde{p})(x,z)=0$, where $J(\lambda_s,\tilde{p})$ is the Jacobian determinant of $\lambda$ and $\tilde{p}$.

If a critical point of $\tilde{p}_{|_{D(f_s)}}$ corresponds to a $m$-multiple point, then we will 
have  $m$ critical points in $D(f_s)$ for one in the image of $f_s$. 
Thus, once the critical points of each type are obtained, we have to divide by the multiplicity of the point. In this way, we obtain the number of critical points of $p$ in the image of $f_s$.

On the other hand, we compute separately the image Milnor numbers of $f$ and $g$ in order to check the formulas.

\begin{ex} (The germ $F_4$ in $\C^3$)
Let $f(x,z)=(x,z^2, z^5 + x^3 z)$ be the germ $F_4$. Let $f_s(x,z)=(x, z^2, z^5 + x s z^3 + (x^3 - 5 x s - s) z)$ be a stabilisation of $f$, for $s\neq0.$ By \cite{SC}, $f$ is a 1-parameter unfolding of the plane curve $A_4,$ $g(z)=(z^2,z^5)$ and in fact, $g$ is the transverse slice of $f$.

\begin{figure}[ht]
\centering
\includegraphics[scale=0.4]{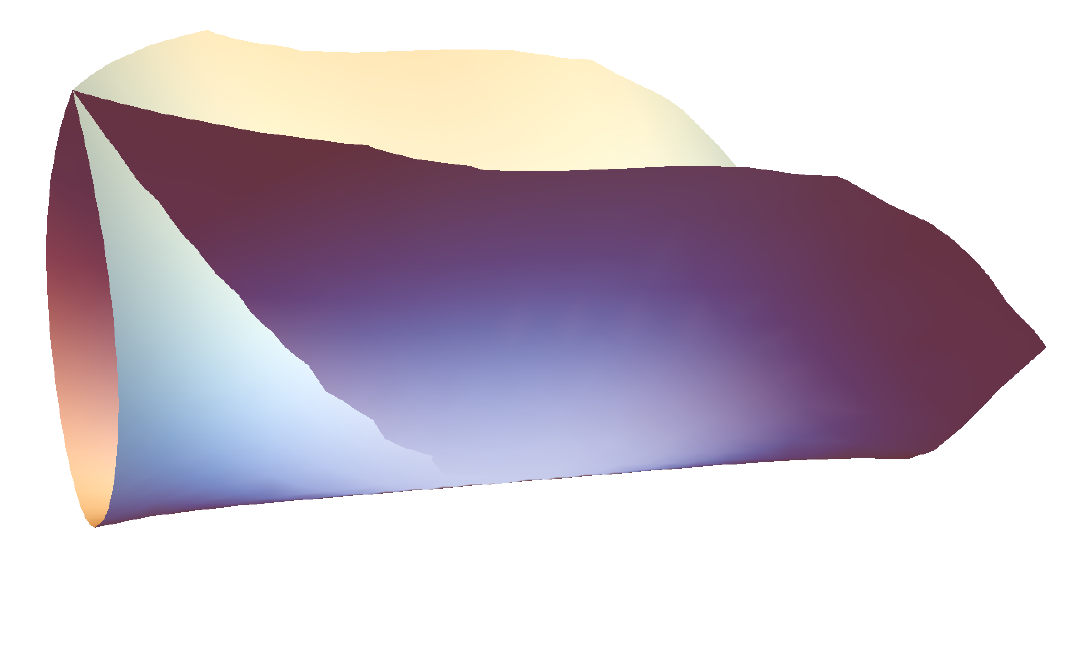}\hspace{1cm}
\includegraphics[scale=0.4]{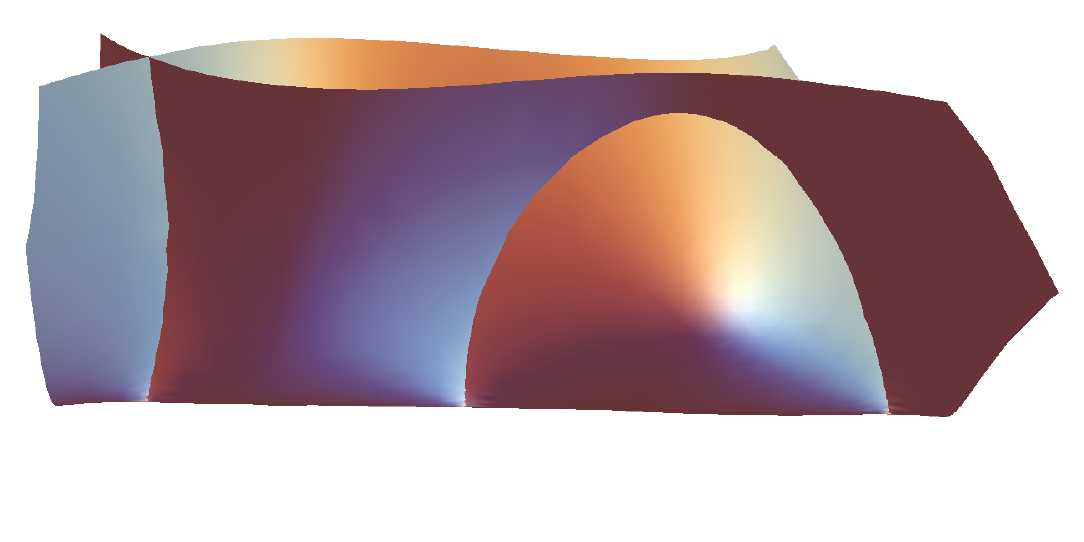}
\caption{The germ $F_4$ and its stabilisation for $s>0$}
\end{figure}

Let $M^3(f_s,(1,1,1))\cup M^2(f_s,(2))$ be the 0-dimensional strata of $X_s$. In our case, there are no triple points and they appear three cross caps in $M^2(f_s,(2))$. 

Let $M^2(f_s,(1,1))$ be the $1$-dimensional stratum of $X_s$. As we said before, let $D^2(f_s)$ be the double point curve in $\C^3$, and by projecting in the first two coordinates, we have the double point curve in $\C^2$, denote by $D(f_s)$.


We compute the resultant of $P_s$ and $Q_s$ respect to $z_2$, where $P_s$ and $Q_s$ are the divided differences. The double point curve of $f_s$ in $\C^2$ is the plane curve $$\lambda_s(x,z)=-s - 5 s x + x^3 + s x z^2 + z^4.$$  
The critical points of the restriction $p{|_D(f_s)}$ are given by
$\lambda_s(x_0,z_0)=0$ and $J(\lambda_s,\tilde{p})(x_0,z_0)=0$, where $\tilde p(x,z)=x$.

\begin{figure}[ht]
\centering
\includegraphics[scale=0.5]{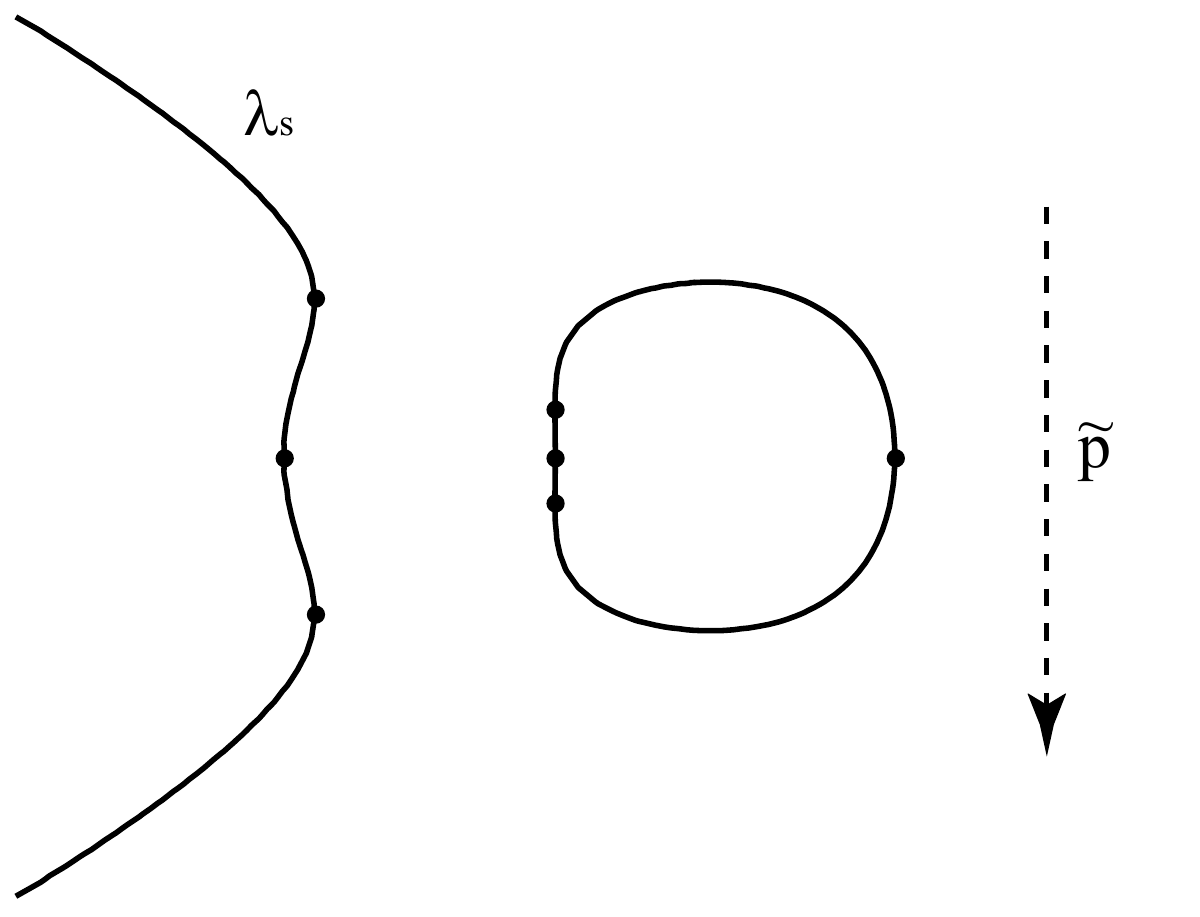}
\caption{Cusps and tacnodes in the double point curve}\label{doblesF4}
\end{figure}

Nine critical points are obtained. Three of these points are cusps in $g_{x,s}$, 
which correspond to the three cross caps of $f_s$. 
Then, the other six critical points in
$\tilde{p}_{|_{\lambda_s(x_0,z_0)=0}}$ correspond to three tacnodes in $g_{x,s}$, which are represented in the double point curve when a vertical line is tangent at two points of $D(f_s)$. So, each two of these critical points in $\lambda_s$ correspond to one tacnode of $g_{x,s}$ in $M^2(f_s,(1,1))$. Note that in the Fig. \ref{doblesF4} there are only two tacnodes, that is because the other is a complex tacnode.

Finally, in the $2$-dimensional stratum $M^1(f_s,(1))$ there are no  critical points. So, the number of critical points in $X_s$ is $\#\Sigma p{|_{X_s}}=6,$ three cusps, three tacnodes and zero triple points. Then,  $\#\Sigma p{|_{X_s}}=C+J+T$ where $C,J,T$ are the numbers of cusps, tacnodes and triple points respectively of $g_{x,s}$. 
By \cite{SC}, $\mu_I(f)=C+J+T-\delta(g)$. Since $g$ is a plane curve, we have that $\mu_I(g)=\delta(g)$ (see \cite{LB}). So, $$\#\Sigma p{|_{X_s}}=C+J+T=\mu_I(f)+\mu_I(g).$$ 
\end{ex}

\bibliographystyle{amsplain}
\bibliography{artLeGreuel}

\end{document}